\documentclass[11pt,a4paper,reqno]{amsart}

\usepackage{amsmath}
\usepackage{amsfonts}
\usepackage{a4wide}
\usepackage{amssymb}
\usepackage{amsthm}

\flushbottom
\theoremstyle{plain}

\newtheorem{teo}{Theorem}[section]

\newtheorem{cor}[teo]{Corollary}
\newtheorem{ackn}{Acknowledgments\!}

\theoremstyle{definition}

\theoremstyle{remark}

\newtheorem{rem}[teo]{Remark}

\numberwithin{equation}{section}

\def\SS{{{\mathbb S}}}

\def\RR{{\mathbb R}}

\def\Ric{{\mathrm {Ric}}}

\usepackage[usenames]{color}
\definecolor{red}{rgb}{1.0,0.0,0.0}

\definecolor{blu}{rgb}{0.0,0.0,1.0}

\title[On the global structure of conformal gradient solitons
with $\Ric\geq 0$]{On the global 
structure of conformal gradient solitons with nonnegative Ricci tensor}
\date{\today}

\author[Giovanni Catino]{Giovanni Catino}
\address[Giovanni Catino]{Dipartimento di Matematica, Politecnico di Milano, Piazza Leonardo da Vinci 32, Milano, Italy, 20133}
\email[G. Catino]{giovanni.catino@polimi.it}

\author[Carlo Mantegazza]{Carlo Mantegazza}
\address[Carlo Mantegazza]{Scuola Normale Superiore di Pisa, Piazza dei Cavalieri 7, Pisa, Italy, 56126}
\email[C. Mantegazza]{c.mantegazza@sns.it}

\author[Lorenzo Mazzieri]{Lorenzo Mazzieri}
\address[Lorenzo Mazzieri]{Scuola Normale Superiore di Pisa, Piazza dei Cavalieri 7, Pisa, Italy, 56126}
\email[L. Mazzieri]{l.mazzieri@sns.it}

\begin{document}

\begin{abstract} In this paper we prove that any complete
  conformal gradient soliton with nonnegative Ricci tensor is either isometric to a direct product $\RR\times N^{n-1}$, or globally conformally equivalent to the Euclidean space $\RR^{n}$ or to the round sphere $\SS^{n}$. In particular, we  show that any complete, noncompact, gradient Yamabe--type soliton with positive Ricci tensor is rotationally symmetric, whenever the potential function is nonconstant.
\end{abstract}

\maketitle

\begin{center}

\noindent{\it Key Words: conformal geometry, Yamabe solitons, warped products}

\medskip

\centerline{\bf AMS subject classification:  53A30, 53C24, 53C25, 53C44}

\end{center}

\section{Introduction}

A connected, complete Riemannian manifold $(M^{n},g)$ is called a {\em
  conformal gradient soliton} if there exists a {\em nonconstant} smooth
function $f$, called {\em potential} of the soliton, such that
\begin{equation*}
\nabla^{2} f \, = \, \varphi\, g \,,
\end{equation*}
for some function $\varphi:M^n\to\RR$. Tracing this equation with the metric $g$, we see
immediately that the function $\varphi$ must coincide with $\Delta f/n$.
Hence, an equivalent characterization of conformal gradient solitons is given by the equation
\begin{equation}\label{confsol}
\nabla^{2} f \, = \, \frac{\Delta f}{n}\, g \,.
\end{equation}

In this note we are going to fully detail a remark of Petersen and
Wylie~\cite[Remark A.3]{pw} about the classification of these
solitons, moreover, we revisit a result of Tashiro~\cite{tashiro}, who
first studied their global structure.

Complete Riemannian manifolds admitting a vector field $\nabla f$
satisfying equation~\eqref{confsol} were studied by many authors in the late
60's. Solutions to equation~\eqref{confsol}
have also been considered in a work by Cheeger and
Colding~\cite{cheegcold}, where the authors give a characterization of
warped product manifolds. In particular, they observe that, in the
complement of the critical points of $f$, any conformal gradient
soliton is isometric to a warped product on some open interval. Taking
advantage of this, we will be able to drastically simplify the proof
of the classification result for conformal gradient solitons given by
Tashiro. Moreover, we further characterize conformal gradient solitons
with nonnegative Ricci tensor, in the spirit of some recent works
about the classification of Einstein--like structures, such as
gradient Ricci solitons and quasi--Einstein manifolds.

Our main result reads:

\begin{teo}\label{teo1} Let $(M^{n},g)$ be a complete conformal gradient soliton and let $f$ be a potential function for it. Then, any regular level set $\Sigma$ of $f$ admits a maximal open neighborhood $U\subset M^n$ on which $f$ only depends on the signed distance $r$ to the hypersurface $\Sigma$. In addiction, the potential function $f$ can be chosen in such a way that the metric $g$ takes the form 
\begin{equation*}
g \, = \, dr^2 \,+ \,(f'(r))^{2}\, g^{\Sigma}\quad {\hbox{on $U$}} ,
\end{equation*}  
where $g^{\Sigma}$ is the metric induced by $g$ on $\Sigma$. As a consequence, $f$ has at most two critical points on $M^n$ and we have the following cases:
\begin{itemize}
\item[(1)] If $f$ has no critical points, then $(M^{n},g)$ is globally conformally equivalent to a direct product $I\times N^{n-1}$ of some interval $I=(t_{*},t^{*})\subseteq \RR$ with a $(n-1)$--dimensional complete Riemannian manifold $(N^{n-1},g^{N})$. More precisely, the metric takes the form
$$
g \, = \, u^{2}(t)\, \big(dt^{2}+g^{N}\big) \, ,
$$
where $u:(t_{*},t^{*})\rightarrow \RR$ is some positive smooth
function. In this case, if $(M^n,g)$ is also locally
conformally flat, it is well known that $(N^{n-1},g^N)$ must have constant curvature.
\smallskip
\item[(1')] If, in addition, the Ricci tensor of $(M^n,g)$ is nonnegative, then $(M^n,g)$ is {\em isometric} to a
direct product $\RR\times N^{n-1}$, where $(N^{n-1},g^N)$ has nonnegative
Ricci tensor. In this case, if $(M^n,g)$ is also locally
conformally flat, then either $(M^n,g)$ is flat or it is a direct product of $\RR$ with a quotient of the round sphere $\SS^{n-1}$.

\smallskip

\item[(2)] If $f$ has only one critical point $O\in M^n$, then 
$(M^{n},g)$ is globally conformally equivalent to the interior of an Euclidean ball of radius $t^{*}\in(0,+\infty]$.
More precisely, on $M^{n}\setminus~\{O\}$, the metric takes the form
$$
g \, = \, v^{2}(t)\, \big(dt^{2}+t^{2}g^{\SS^{n-1}}\big) \,,
$$
where $v:(0,t^{*})\rightarrow \RR$ is some positive smooth function. In particular $(M^{n},g)$ is complete, noncompact and rotationally symmetric.
\smallskip
\item[(2')] If, in addition, the Ricci tensor of $(M^n,g)$ is nonnegative, then $(M^n,g)$ is globally conformally equivalent to $\RR^{n}$.

\smallskip

\item[(3)] If the function $f$ has two critical points $N,S \in M^n$, then $(M^{n},g)$ is globally conformally equivalent to $\SS^{n}$.  More precisely, on $M^{n}\setminus \{N,S\}$, the metric takes the form
$$
g \, = \, w^{2}(t)\, \big(dt^{2}+\sin^{2}(t)\,g^{\SS^{n-1}}\big) \,,
$$
where $w:(0,\pi)\rightarrow \RR$ is some smooth positive function. In particular $(M^{n},g)$ is compact and rotationally symmetric.
\end{itemize}  
\end{teo}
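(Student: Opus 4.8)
The plan is to first produce the warped product structure around a regular level set, then to pin down the critical points of $f$, and finally to read off the three cases together with their refinements under $\Ric\geq 0$. For the first step I would differentiate the norm of the gradient: from \eqref{confsol} one gets $\nabla|\nabla f|^2=2\varphi\,\nabla f$, so $|\nabla f|$ is constant on the connected components of the regular level sets and the integral curves of $\nabla f/|\nabla f|$ are unit-speed geodesics. Parametrizing a tubular neighborhood $U$ of a regular level set $\Sigma$ by the signed distance $r$, this gives $f=f(r)$ with $f'(r)=|\nabla f|$; the radial--radial part of \eqref{confsol} yields $f''=\varphi$, while its tangential part gives the ODE $\partial_r g_r=2(f''/f')\,g_r$ for the metric $g_r$ induced on the slices. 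Integrating, and normalizing $f$ so that $|\nabla f|=1$ on $\Sigma$, produces $g=dr^2+(f'(r))^2 g^{\Sigma}$ on $U$; equivalently, this is exactly the warped product characterization of Cheeger--Colding \cite{cheegcold}, which I would simply invoke.

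Since $|\nabla f|^2$ is constant on level sets, it equals $\beta(f)$ for some $\beta\geq 0$ depending only on the value of $f$, and comparing gradients gives $\varphi=\tfrac12\beta'(f)$ on the regular set. At a critical point $p$ with $f(p)=c_0$ the Hessian is $\varphi(p)\,g$; if $\varphi(p)=0$ then $\beta(c_0)=\beta'(c_0)=0$, so $\beta(f)\leq C|f-c_0|^2$, and a Gronwall estimate for $f$ along geodesics issuing from $p$ forces $f\equiv c_0$ near $p$, whence $f$ is constant on the connected $M^n$ — a contradiction. Thus $\varphi(p)\neq 0$ and every critical point is a nondegenerate strict local extremum, in particular isolated, with no saddles. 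Because $\beta\geq 0$ can vanish with nonzero derivative only at the endpoints of the range of $f$, the critical values are at most the two extreme values; and since two minima (or two maxima) would yield level sets with two spherical components that can never merge in the absence of saddles, sweeping out disjoint regions and disconnecting $M^n$, there is at most one maximum and one minimum. Hence $f$ has at most two critical points.

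For the three cases I would change variables. When $f$ has no critical point, $t=\int dr/f'(r)$ converts $g=dr^2+(f')^2 g^{\Sigma}$ into $g=u^2(t)\big(dt^2+g^{\Sigma}\big)$, giving case (1) with $N^{n-1}=\Sigma$ and $I$ the range of $t$. Near a critical point the exponential map furnishes polar coordinates, and smoothness of $g$ at the pole forces the slices to be round, i.e.\ $g=dr^2+(f'(r))^2 g^{\SS^{n-1}}$ with $f'(0)=0$ and $(f')'(0)=1$; the substitutions $t=\exp\!\big(\int dr/f'\big)$ and its $\sin$-type analogue then yield the rotationally symmetric forms of cases (2) and (3), the number of caps matching the number of critical points and producing conformal equivalence to a Euclidean ball of radius $t^{*}$ or to $\SS^{n}$.

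For the refinements I would compute the radial Ricci curvature of the warped product, $\Ric(\partial_r,\partial_r)=-(n-1)\,(f')''/f'$, so that $\Ric\geq 0$ is equivalent to concavity of $\psi:=f'$. In case (1') completeness and the absence of critical points make the domain of $r$ all of $\RR$ with $\psi>0$; a positive concave function on $\RR$ is constant, so $\varphi=f''\equiv 0$, $\nabla f$ is parallel, and $M^n$ splits isometrically as $\RR\times N^{n-1}$, with the locally conformally flat case forcing constant — hence nonnegative — curvature on $N^{n-1}$. In case (2') concavity with $\psi(0)=0$, $\psi'(0)=1$ gives $\psi(r)\leq r$, so $\int^{\infty}dr/\psi$ diverges, forcing $t^{*}=\infty$ and thus conformal equivalence to all of $\RR^{n}$. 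The hardest parts, I expect, are the global topological steps of the second paragraph — excluding saddles and extra extrema from connectedness, and the smooth closing of the warped product to round geodesic spheres at the poles; once these are secured, the $\Ric\geq 0$ refinements reduce to the elementary rigidity of positive concave functions.
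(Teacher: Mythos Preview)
Your proposal is correct and follows essentially the same route as the paper: derive the warped-product form from the ODE $\partial_r g_r = 2(f''/f')\,g_r$, force round slices near a critical point via the polar-coordinate expansion, apply the same changes of variable $t=\int dr/f'$, $t=\exp\int dr/f'$, and its $\sin$-analogue for the three normal forms, and use concavity of $f'$ from $\Ric(\partial_r,\partial_r)=-(n-1)(f')''/f'\geq 0$ for the refinements. The one minor difference is that you bound the number of critical points directly via a Gronwall/Morse argument on $\beta(f)=|\nabla f|^2$, whereas the paper extracts this implicitly from the smooth closing-up of the warped product at the endpoints of the maximal $r$-interval.
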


In Section~\ref{s_proof} we will prove Theorem~\ref{teo1}, whereas in
Section~\ref{s_yamabe} we will focus our attention on the
classification of gradient Yamabe solitons. These are conformal
gradient solitons satisfying the equation
\begin{equation*}
\nabla^{2}f \, = \, (R-\lambda)\,g\,,
\end{equation*}
for some constant $\lambda$. We will show that any complete,
noncompact, gradient Yamabe soliton with nonnegative Ricci tensor
either has constant scalar curvature, or it splits isometrically as a
direct product $\RR\times N^{n-1}$, or it is rotationally symmetric
and globally conformally equivalent to $\RR^{n}$ 
(see Theorem~\ref{teoY} and Theorem~\ref{teokY} for the generalization
to the case of gradient $k$--Yamabe solitons).

\medskip

\section{Proof of Theorem~\ref{teo1}}\label{s_proof}

Let $\Sigma$ be a regular level set of the function $f:M^n\to\RR$,
i.e. $|\nabla f|\neq 0$ on $\Sigma$, which exists by Sard's Theorem and the fact that $f$ is nonconstant in our definition. We have that $|\nabla f|$ has to be
constant on $\Sigma$. Indeed, for all $X\in T_{p}\Sigma$
$$
\nabla_{X} |\nabla f|^{2} \,=\, 2 \,\nabla^{2} f (\nabla f, X) =
\frac{2\,\Delta f}{n} \,g(\nabla f,X) \, = \, 0\,.
$$
From this we deduce that, in a neighborhood $U$ of $\Sigma$ not
containing any critical point of $f$, such potential function only depends
on the signed distance $r$ to the hypersurface $\Sigma$. In particular
$df=f' dr$. Moreover, if $\theta=(\theta^{1}\,\ldots,\theta^{n-1})$ are coordinates {\em adapted} to the hypersurface $\Sigma$, we get
$$
\nabla^{2}f \,=\, \nabla df \,=\, f'' dr\otimes dr + f' \nabla^2 
r=\, f'' dr\otimes dr + \frac{f'}{2} \, \partial_r g_{ij} \,d\theta^i\otimes d\theta^j\,,
$$
as
$$
\Gamma_{rr}^r=\Gamma_{rr}^k=\Gamma_{ir}^r=0\,,\qquad
\Gamma_{ij}^r=- \frac{1}{2} \,\partial_r g_{ij}\,,\qquad
\Gamma_{ir}^k= \frac{1}{2} \, g^{ks}\partial_r g_{is}\,.
$$
On the other hand, using equation~\eqref{confsol}, we have
$$
\nabla^{2} f \, = \, \frac{\Delta f}{n}\, g \,= \, \frac{\Delta
  f}{n}\,(\,dr \otimes dr + g_{ij}\, d\theta^{i} \otimes
d\theta^{j}\,)\,,
$$
thus, $\Delta f = n f''$ and $g_{ij} \Delta f  = \frac{n}{2} f' \,\partial_{r}
g_{ij}$. These equations imply the family of ODE's
$$
f''(r) \,g_{ij}(r,\theta)  \,=\, \frac{f'(r)}{2} \,\partial_{r}
g_{ij}(r,\theta)\,.
$$
Since $f'(0)\not=0$ (otherwise $\Sigma$ is not a regular level set of $f$) we can integrate these equations obtaining
$$
g_{ij}(r,\theta) \, = \, \big(f'(r)/f'(0)\big)^{2} g_{ij}(0,\theta)\,.
$$
Therefore, in $U$ the metric takes the form
$$
g \, = \, dr \otimes dr \,+ \,\big(f'(r)/f'(0)\big)^{2}\, \sigma_{ij}(\theta)\,d\theta^{i} \otimes d\theta^{j}\,,
$$
where $g^{\Sigma}_{ij}(\theta)=g_{ij}(0,\theta)$ is the metric induced by $g$ on $\Sigma$. We notice that, since $f=f(r)$, then the width of the neighborhood $U$ is uniform with respect to the points of $\Sigma$, namely we can assume $U=\{r_{*}<r<r^{*}\}$, for some maximal $r_{*}\in[-\infty,0)$ and $r^{*}\in(0,\infty]$. Moreover, by the scalar invariance of equation~\eqref{confsol}, we can assume that $f'(0)=1$, possibly changing the function $f$. Hence, in $U$, the metric can be written as
\begin{equation}\label{metric}
g \, = \, dr \otimes dr \,+ \,(f')^{2}\, g^{\Sigma}\,,
\end{equation}
where $g^{\Sigma}$ denotes the induced metric on the level set $\Sigma$. Moreover, the Ricci tensor and the scalar curvature of the metric $g$ take the form (see~\cite[Proposition~9.106]{besse}) 
\begin{equation}\label{ricci}
\Ric_{g} = -(n-1) \frac{f'''}{f'} dr\otimes dr + \Ric^{\Sigma} - \big((n-2)(f'')^{2}) + f'\,f'''\big)\,g^{\Sigma}\,,
\end{equation}
\begin{equation}\label{scalar}
R_{g} = -2(n-1)\frac{f'''}{f'} + \frac{R^\Sigma-(n-1)(n-2)(f'')^{2}}{(f')^{2}}\,.
\end{equation} 

\medskip

{\bf Case 1: $f$ has no critical points.} Since $(M^{n},g)$ is complete, the width of the maximal neighborhood $U$ is unbounded in both the negative and the positive direction of the signed distance $r$ (i.e., $r_{*}=-\infty$ and $r^{*}=+\infty$). To complete the proof, it is sufficient to set
$$
t(r) \,=\, \int_{0}^{r}\frac{1}{f'(z)}\,dz\,.
$$
For $r\in(-\infty,+\infty)$, we have $t\in(t_{*},t^{*})$, where $t_{*}=\lim_{r \rightarrow r_{*}} t(r)\in[-\infty,0)$ and $t^{*}=\lim_{r \rightarrow r^{*}} t(r)\in(0,+\infty]$. Moreover, $t'(r)\neq 0$ and $r$ can be viewed as a function of $t$ by inverting the expression above. From~\eqref{metric}, the metric takes the form
$$
g \,=\, u(t)^{2}\big(\,dt^{2}+g^{\Sigma}\,\big)\,,
$$
where $u(t)=f'(r(t))$.

\medskip

{\bf Case 1': $f$ has no critical points and $\Ric\geq 0$.} From formula~\eqref{ricci} and the fact that $g$ has nonnegative Ricci tensor, one has
$$
0\,\leq\, \Ric_{g}(\partial_r, \partial_r) \,=\, -(n-1)\frac{f'''}{f'}\,.
$$
Hence, $f'$ is a concave function defined on the whole real line that can
never be zero. Thus, it must be constant, that is $f'\equiv 1$, according to
our choice of $f'(0)$. This implies that $(M^{n},g)$ is isometric to the
direct product $\RR\times N^{n-1}$ of the real line with a
$(n-1)$--dimensional complete Riemannian manifold with nonnegative Ricci tensor.

\begin{rem}\label{remark}  We notice that, in this case, the Ricci tensor has a zero eigenvalue at every point. Hence, there are no examples of such manifolds for which the Ricci tensor is positive definite at some point.
\end{rem}

\medskip

 {\bf Case 2: $f$ has only one critical point $O\in M^n$.} In this case, since $(M^{n},g)$ is complete, we can assume that the width of the neighborhood $U$ is unbounded in the positive direction of the signed distance (i.e., $r_{*}>-\infty$ and $r^{*}=+\infty$) and $f'\rightarrow 0$, as $r\rightarrow r_{*}$. By formula~\eqref{ricci} and the smoothness of the metric $g$, we have that $f'''/f'$ is bounded, as $r\rightarrow r_{*}$. Hence, from~\eqref{scalar}, we deduce that
$$
R^{\Sigma}-(n-1)(n-2)(f'')^{2}\longrightarrow 0\,,\quad\quad\hbox{as}\,\, r\rightarrow r_{*}\,.
$$
In particular $R^{\Sigma}$ is nonnegative and constant along $\Sigma$. Moreover, it is easy to see that
\begin{equation}\label{hopital}
\lim_{r\to r_{*}} \,\frac{f'(r)}{r-r_{*}} \, = \lim_{r\to r_*}
f''(r)\, = \bigl(R^{\Sigma}/(n-1)(n-2)\bigr)^{1/2}\,.
\end{equation} 

To conclude the proof of Case~2 of Theorem~\ref{teo1}, it remains to
show that the induced metric $g^{\Sigma}$ on the level set $\Sigma$
is proportional to the round metric $g^{\SS^{n-1}}$ of the
$(n-1)$--dimensional sphere. This follows from the
elementary fact that, infinitesimally, the metric $g$ is approximately
Euclidean near $O$. 
Indeed, the standard expansion of the metric $g$ around $O$, written
in normal coordinates $(x^1, \cdots, x^n)$, gives
\begin{eqnarray*} 
g &=& (\delta_{ij}+ \eta_{ij}(x))\, dx^{i}\otimes dx^{j} \\
&=& g^{\mathbb{R}^{n}}+ \eta_{ij}\, dx^{i}\otimes dx^{j} \,,
\end{eqnarray*} 
where $\eta_{ij}=\mathcal{O}(|x|^{2})$. Passing to Riemannian polar
coordinates, we write $x^{i} = s
\,\phi^{i}(\vartheta^{1},\ldots,\vartheta^{n-1)})$, with
$s=r-r_{*}\in(0,+\infty)$ and $(\vartheta^{1},\ldots,\vartheta^{n-1})$
being local coordinates on $\mathbb{S}^{n-1}$. Notice that
$|\phi^{1}|^{2}+\dots+|\phi^{n}|^{2}=1$ and $|x|=s$. Thus, one has
\begin{eqnarray*}
g &=& ds\otimes ds + \big( s^{2}
{g}^{\mathbb{S}^{n-1}}_{\alpha\beta}+\,s^{2}\eta_{ij}\frac{\partial
  \phi^{i}}{\partial \vartheta^{\alpha}}\frac{\partial
  \phi^{j}}{\partial \vartheta^{\beta}}\big) \,d\vartheta^{\alpha}
\otimes d\vartheta^{\beta}\,,
\end{eqnarray*} 
with $\eta_{ij}=\mathcal{O}(s^{2})$. Comparing with
expression~\eqref{metric}, we see that, for $s\in(0,+\infty)$, we have
$$
f'(s+r_{*})^{2} g^{\Sigma} \,= \, s^{2} {g}_{\mathbb{S}^{n-1}} +
s^{2}\eta_{ij}\frac{\partial \phi^{i}}{\partial
  \vartheta^{\alpha}}\frac{\partial \phi^{j}}{\partial
  \vartheta^{\beta}} \,d\vartheta^{\alpha} \otimes
d\vartheta^{\beta}\,.
$$
Now, combining the fact that $\eta_{ij}=\mathcal{O}(s^{2})$
with formula~\eqref{hopital}, if we take the limit as $s\to 0$ (which
means $r\to r_{*}$) we obtain $R^{\Sigma}>0$ and
$$
g^{\Sigma} \,=\, c^{2}\,{g}_{\mathbb{S}^{n-1}} \,,
$$ 
with $c^{2}=(n-1)(n-2)/R^{\Sigma}$. Therefore, on $M^n\setminus\{O\}$, we have 
$$
g = ds^{2} + (c\,f'(s+r_{*}))^{2} \,{g}_{\mathbb{S}^{n-1}}\,.
$$
This proves that $g$ is rotationally
symmetric. To complete the proof, we set
\begin{equation}\label{eqqq7}
t(s) \, = \, \exp\Big(\,\frac{1}{c}\,\int_{-r_{*}}^{s}\frac{1}{f'(z+r_{*})}\,dz\,\Big)\,.
\end{equation}
For $s\in(0,+\infty)$, we have $t\in(0,t^{*})$, where $t^{*}=\lim_{s
  \to +\infty} t(s)\in(0,+\infty]$. Notice that $t'(s)\neq 0$, hence,
the coordinate $s$ can be viewed as a function of $t$ by inverting the
expression above. Moreover,
$$
\frac{dt}{t} \,=\, \frac{ds}{c\,f'(s+r_{*})}
$$
and the metric $g$ can be expressed as in the statement, namely
$$
g\,=\, v(t)^{2}\big(\,dt^{2}+t^{2}\,g^{\SS^{n-1}}\,\big)\,,
$$
where $v(t)=c\,f'(s(t)+r_{*})/t$.

\medskip

{\bf Case 2': $f$ has one critical point $O\in M^n$ and $\Ric\geq
  0$.} Like in the Case 1' above, as $\Ric\geq0$, we have that $z\mapsto f'(z+r_{*})$
is a concave function. In particular, it is definitely bounded above by some linear function, as $z\rightarrow +\infty$. 
Then, by the very definition of $t^*$ in equation~\eqref{eqqq7}, it
follows that $t^*=+\infty$. This clearly implies that $(M^{n},g)$ is globally conformally
equivalent to $\RR^{n}$ and rotationally symmetric.

\medskip

{\bf  Case 3: $f$ has two critical points $N,S\in M^n$.} 
We assume that the width of the neighborhood $U$ is bounded in
both the negative and the positive directions of the signed distance
(i.e., $r_{*}>-\infty$ and $r^{*}<+\infty$). In particular $(M^{n},g)$
is compact (it ``closes'' at the points $N$ and $S$) and there cannot
be other critical points around. The same argument used in the proof of Case~2 implies at once the
rotational symmetry of $g$. 
Namely, on $M^n\setminus\{N,S\}$, we have 
$$
g = ds^{2} + (c\,f'(s+r_{*}))^{2} \,{g}_{\mathbb{S}^{n-1}}\,,
$$
where $c^{2}=(n-1)(n-2)/R^{\Sigma}$. To complete the proof of Case 3, we set
$$
t(s) \, = \, 2\arctan\exp\Big(\,\frac{1}{c}\,\int_{-r_{*}}^{s}\frac{1}{f'(z+r_{*})}\,dz\,\Big)\,.
$$
For $s\in(0,r^{*}-r_{*})$, we have $t\in(0,\pi)$ and $t'(s)\neq 0$,
hence, $s$ can be viewed as a function of $t$ by inverting the
expression above. Moreover,
$$
\frac{dt}{\sin(t)} \,=\, \frac{ds}{c\,f'(s+r_{*})}
$$
and the metric $g$ can be expressed as in the statement, namely
$$
g\,=\, w(t)^{2}\big(\,dt^{2}+\sin^{2}(t)\,g^{\SS^{n-1}}\,\big)
$$
where $w(t)=c\,f'(s(t)+r_{*})/\sin(t)$.

\medskip

This completes the proof of Theorem~\ref{teo1}.

\medskip

\section{Classification of Yamabe--Type Solitons with Nonnegative Ricci
  Tensor}\label{s_yamabe}

Let $(M^{n},g)$, $n\geq 3$, be a complete Riemannian manifold verifying
\begin{equation}\label{sol}
\nabla^{2} f \, = \, \varphi\, g \,,
\end{equation}
for some smooth functions $f$ and $\varphi$ on $M^{n}$. When the potential function $f$ is nonconstant, then, according to our definition, $(M^{n},g)$ is a {\em conformal gradient soliton} and Theorem~\ref{teo1} applies.

We first notice that, by taking the divergence of this equation, we have
$$
\nabla _{i} \varphi \, = \, \Delta \nabla_{i} f \, =
\,\nabla_{j}\nabla_{i}\nabla^{j} f \, 
= \, \nabla_{i} \Delta f + R_{ij}\nabla^{j}f\,,
$$
where we interchanged the covariant derivatives. Now, using
the fact that $\Delta f=n\,\varphi$, we obtain the following identity 
\begin{equation}\label{eq2}
(n-1) \, \nabla_{i} \varphi \,=\, - R_{ij} \nabla^{j} f\,.
\end{equation}

We will discuss now some geometric applications of Theorem~\ref{teo1} to {\em gradient Yamabe solitons} and {\em gradient $k$--Yamabe solitons}.

\medskip 

\subsection{Gradient Yamabe Solitons}
A Riemannian manifold $(M^{n},g)$ is called a {\em gradient Yamabe
  soliton} if it satisfies equation~\eqref{sol} with $\varphi \, =
\, R-\lambda$ for some constant $\lambda\in\RR$, i.e., there exists
a smooth function $f$ (notice that here we are not excluding the case of a constant  $f$) such that
\begin{equation}\label{Ysol}
\nabla^{2}f \, = \, (R-\lambda)\,g\,.
\end{equation}
If $\lambda=0$, $\lambda>0$ or $\lambda<0$, then the soliton is called {\em
  steady}, {\em shrinking} or {\em expanding}, respectively. We recall that gradient Yamabe solitons are self--similar solutions to the Yamabe flow 
$$
\frac{\partial}{\partial t} \,g \, = \, -R\,g\,.
$$
This flow was first introduced by Hamilton and we refer the reader to~\cite{dasksesum} and the references therein for further details on this subject.

Notice that any Riemannian manifold with constant scalar curvature
moves by the Yamabe flow only by dilations. Hence, it is trivially a
self--similar solution and a gradient Yamabe soliton with $R=\lambda$ and $f$ constant. Thus, according to our definitions, only gradient Yamabe solitons with nonconstant potential function $f$ can be viewed as conformal gradient solitons. 

On the other hand, it is well known (see, for
instance~\cite[Proposition~B.16]{chowluni}) 
that any compact gradient Yamabe soliton has constant
scalar curvature $R=\lambda$. For the sake of completeness, we report here the proof.
\begin{teo}\label{teoYcpt} Any compact gradient Yamabe soliton has 
constant scalar curvature $R=\lambda$. Moreover, the potential function $f$ is constant.
\end{teo}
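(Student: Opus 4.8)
The plan is to prove both assertions of Theorem~\ref{teoYcpt} simultaneously by integrating the soliton equation over the compact manifold $M^n$ and exploiting the absence of boundary. The key observation is equation~\eqref{eq2}, namely $(n-1)\nabla_i\varphi = -R_{ij}\nabla^j f$, specialized to the Yamabe case where $\varphi = R-\lambda$, so that $\nabla\varphi = \nabla R$. First I would contract~\eqref{eq2} with $\nabla^i f$ to obtain an expression of the form $(n-1)\,g(\nabla R,\nabla f) = -R_{ij}\nabla^i f\,\nabla^j f$, which will later let me control the Ricci term; but the cleaner route is to work directly with the trace of the soliton equation. Tracing~\eqref{Ysol} gives $\Delta f = n(R-\lambda)$, and this is the identity I expect to integrate.

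Next I would integrate $\Delta f = n(R-\lambda)$ over the closed manifold $M^n$. Since $\int_M \Delta f \, dV = 0$ by the divergence theorem (there is no boundary), we immediately get $\int_M (R-\lambda)\,dV = 0$, i.e. the average of $R$ equals $\lambda$. This alone does not force $R$ to be constant, so the heart of the argument is a second integral identity. The plan is to multiply the trace equation by $(R-\lambda)$ and integrate, or equivalently to compute $\int_M (R-\lambda)\,\Delta f\, dV$ in two ways. On one hand this equals $n\int_M (R-\lambda)^2\,dV$; on the other, integrating by parts yields $-\int_M g(\nabla(R-\lambda),\nabla f)\,dV = -\int_M g(\nabla R,\nabla f)\,dV$. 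I would then substitute the contracted version of~\eqref{eq2}, writing $g(\nabla R,\nabla f)=g(\nabla\varphi,\nabla f)= -\tfrac{1}{n-1}R_{ij}\nabla^i f\,\nabla^j f$, to convert this into a curvature integral.

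At this stage the identity reads, up to the harmless constant factors,
$$
n\int_M (R-\lambda)^2\,dV \,=\, \frac{1}{n-1}\int_M R_{ij}\,\nabla^i f\,\nabla^j f\,dV\,.
$$
The main obstacle, and the step requiring the most care, is controlling the sign of the right-hand side: in the generality of this theorem we are \emph{not} assuming $\Ric\geq 0$, so one cannot simply declare the Ricci term nonnegative. I expect the correct resolution is to feed $\nabla^2 f = (R-\lambda)g$ back into a Bochner-type manipulation, or to integrate the full contracted second Bianchi identity, so that the Ricci term is itself re-expressed through $\varphi=R-\lambda$ and its derivatives and the two sides combine into a single definite-sign integral (a sum of squares such as $\int_M |\nabla^2 f|^2$ or $\int_M(R-\lambda)^2$). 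Once the identity collapses to $\int_M (R-\lambda)^2\,dV = 0$, it follows that $R\equiv\lambda$, hence $\varphi\equiv 0$ and $\nabla^2 f\equiv 0$. The final step is short: on a compact manifold a function with $\nabla^2 f=0$ has $\nabla f$ parallel, so $|\nabla f|$ is constant, and integrating $\Delta f = 0$ against $f$ gives $\int_M|\nabla f|^2\,dV=0$; thus $\nabla f\equiv 0$ and $f$ is constant.
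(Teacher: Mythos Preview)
Your setup is correct and lines up with the paper's approach: trace to get $\Delta f=n(R-\lambda)$, integrate to see $\lambda$ is the average of $R$, and then compute $\int_M(R-\lambda)\,\Delta f\,dV$ two ways to obtain
\[
n\int_M(R-\lambda)^2\,dV \;=\; -\int_M g(\nabla R,\nabla f)\,dV.
\]
But there is a genuine gap at the next step. Substituting~\eqref{eq2} to rewrite the right side as $\tfrac{1}{n-1}\int_M R_{ij}\nabla^i f\,\nabla^j f\,dV$ is a detour: applying~\eqref{eq2} again just sends you back to $-\int g(\nabla R,\nabla f)$, so nothing new is gained, and framing the difficulty as a ``sign problem'' for the Ricci term is misleading. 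The missing idea is not to bound that term but to obtain a \emph{second, independent} evaluation of $-\int g(\nabla R,\nabla f)$ with a different numerical coefficient.

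The paper does exactly this by contracting $\nabla^2 f=(R-\lambda)g$ with the Ricci tensor and integrating by parts using Schur's lemma $\nabla^i R_{ij}=\tfrac12\nabla_j R$ (this is the ``contracted second Bianchi identity'' you allude to but do not carry out):
\[
\int_M(R-\lambda)^2\,dV=\int_M(R-\lambda)R\,dV=\int_M R^{ij}\nabla_{ij}f\,dV=-\tfrac12\int_M g(\nabla R,\nabla f)\,dV,
\]
so $-\int g(\nabla R,\nabla f)=2\int(R-\lambda)^2$. Comparing with your identity gives $(n-2)\int_M(R-\lambda)^2\,dV=0$, hence $R\equiv\lambda$ since $n\geq 3$. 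Your concluding argument that $f$ is then constant is fine (indeed $\Delta f=0$ on a closed manifold already suffices).
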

\begin{proof}
Contracting equation~\eqref{Ysol} with the Ricci tensor and
integrating over $M^n$, we obtain
$$
\int_{M^n}(R-\lambda)R\,dV_{g} \,= \, \int_{M^n}R_{ij}\,\nabla^{ij}
f\,dV_{g} \,=\, - \int_{M^n}\nabla_{i}R_{ij}\,\nabla_{j}f\,dV_{g}\,=\,
-\frac{1}{2}\int_{M^n}\langle\nabla R,\nabla f\rangle\,dV_{g}\,,
$$
where in the last equality we have used Schur's lemma
$2\,\hbox{div}(\Ric) = dR$. Moreover, from equation~\eqref{Ysol}, one
has that $\Delta f=n(R-\lambda)$. Hence, it follows that
$\lambda=\tfrac{1}{Vol(M^n)}\int_{M^n} R\,dV_g$ and, from the previous
computation, we get
$$
\int_{M^n}(R-\lambda)^{2}\,dV_{g} \, = \,
-\frac{1}{2}\int_{M^n}\langle\nabla R,\nabla f\rangle\,dV_{g} \, = \,
\frac{n}{2}\int_{M^n} (R-\lambda)^{2}\,dV_{g}\,.
$$
Since $n\geq 3$, this implies that $R$ coincides with the constant $\lambda$. As an immediate consequence, by the relation~\eqref{Ysol}, we have
that $\Delta f$ is zero. Since $M^n$ is compact, the function $f$ is constant as well.
\end{proof}

When the function $f$ is constant, obviously the scalar curvature of the gradient Yamabe soliton is constant as well. In such a case, Theorem~\ref{teo1} does not apply. In the sequel we will always assume that relation~\eqref{Ysol} is satisfied by some nonconstant function $f$, so we will deal only with noncompact gradient Yamabe solitons. In this case, as an immediate application of Theorem~\ref{teo1}, we can prove the following global result.

\begin{teo}\label{teoY} Let $(M^{n},g)$ be a complete, noncompact,
  gradient Yamabe soliton with nonnegative Ricci tensor and
  nonconstant potential function $f$. Then, we have the following two cases:
\begin{itemize}  
 \item[(1)] either $(M^{n},g)$ is a direct product
  $\RR\times N^{n-1}$ where $(N^{n-1},g^N)$ is 
an $(n-1)$--dimensional complete Riemannian manifold with nonnegative Ricci tensor. If in addition $(M^{n},g)$ is locally conformally flat, then either it is flat or 
the manifold $(N^{n-1},g^N)$ is a quotient of the round sphere $\SS^{n-1}$;
\smallskip
\item[(2)] or $(M^{n},g)$ is rotationally symmetric and globally
  conformally equivalent to $\RR^n$. More precisely, there exists a
  point $O\in M^n$ such that on $M^{n}\setminus \{O\}$,
  the metric has the form
$$
g \, = \, v^{2}(t)\, \big(dt^{2}+t^{2}g^{\SS^{n-1}}\big) \,,
$$
where $v:\RR^+\to \RR$ is some positive smooth function.
\end{itemize}
\end{teo}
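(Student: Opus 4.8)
The plan is to deduce Theorem~\ref{teoY} as a direct consequence of Theorem~\ref{teo1}, since the entire geometric analysis has already been carried out there. The first step is to check that the hypotheses match. By assumption the potential $f$ is nonconstant and satisfies $\nabla^2 f = (R-\lambda)\,g$, which is an equation of the form $\nabla^2 f = \varphi\, g$ with $\varphi = R-\lambda$; hence $(M^n,g)$ is a conformal gradient soliton in the sense of~\eqref{confsol}, and Theorem~\ref{teo1} applies verbatim. Moreover, because we are assuming $\Ric \geq 0$, we are entitled to the refined conclusions of that theorem.

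Next I would invoke the trichotomy of Theorem~\ref{teo1}, organized according to the number of critical points of $f$. With the hypothesis $\Ric \geq 0$ in force, the relevant alternatives are Case~(1'), in which $f$ has no critical points and $(M^n,g)$ is isometric to a product $\RR \times N^{n-1}$ with $(N^{n-1},g^N)$ of nonnegative Ricci tensor; Case~(2'), in which $f$ has a single critical point $O$ and $(M^n,g)$ is rotationally symmetric and globally conformally equivalent to $\RR^n$; and Case~(3), in which $f$ has two critical points and $(M^n,g)$ is compact.

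The decisive step is to eliminate Case~(3) by means of the standing noncompactness assumption: a two--critical--point soliton closes up into a compact manifold, contradicting the hypothesis that $(M^n,g)$ is noncompact. Alternatively, one may observe that Theorem~\ref{teoYcpt} forces $f$ to be constant in the compact setting, again contradicting our assumption. Once Case~(3) is discarded, Case~(1') yields alternative~(1) of the statement --- including the locally conformally flat dichotomy between the flat case and a product of $\RR$ with a quotient of the round sphere, read off directly from Theorem~\ref{teo1} --- while Case~(2') yields alternative~(2), in the precise rotationally symmetric form $g = v^2(t)\big(dt^2 + t^2\, g^{\SS^{n-1}}\big)$ on $M^n \setminus \{O\}$.

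I do not anticipate a genuine obstacle, since the substantive work lies entirely in Theorem~\ref{teo1}. The only points requiring care are verifying that the Yamabe soliton equation is a special instance of~\eqref{confsol}, which hinges on $f$ being nonconstant, and confirming that the noncompactness hypothesis excludes precisely the compact Case~(3), leaving the two asserted alternatives.
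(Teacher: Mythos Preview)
Your proposal is correct and matches the paper's own treatment: the paper states Theorem~\ref{teoY} ``as an immediate application of Theorem~\ref{teo1}'' without further argument, and you have spelled out precisely how that application goes --- identifying the Yamabe soliton equation as a special case of~\eqref{confsol}, invoking the trichotomy with $\Ric\geq 0$, and discarding Case~(3) via noncompactness. Your alternative elimination of Case~(3) via Theorem~\ref{teoYcpt} is a valid extra observation, though the noncompactness hypothesis alone already suffices.
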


From Remark~\ref{remark}, it is now easy to deduce the following
corollary.

\begin{cor}\label{corY} Let $(M^{n},g)$ be a complete, noncompact,
  gradient Yamabe soliton with nonnegative Ricci tensor and
  nonconstant potential function $f$. If the Ricci tensor is positive definite 
  at some point, then $(M^{n},g)$ is rotationally symmetric and globally
  conformally equivalent to $\RR^n$, in particular, it is locally conformally flat.
\end{cor}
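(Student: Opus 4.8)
The plan is to obtain the corollary as an immediate consequence of Theorem~\ref{teoY} combined with Remark~\ref{remark}, by means of a dichotomy followed by the elimination of one alternative. Since $(M^n,g)$ is assumed to be a complete, noncompact gradient Yamabe soliton with $\Ric\geq 0$ and nonconstant potential function $f$, the hypotheses of Theorem~\ref{teoY} are met, and that result places us in exactly one of two mutually exclusive situations: either $(M^n,g)$ is isometric to a direct product $\RR\times N^{n-1}$, or it is rotationally symmetric and globally conformally equivalent to $\RR^n$.

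First I would discard the product alternative. If $(M^n,g)$ were isometric to $\RR\times N^{n-1}$, then we would be in the situation described by Remark~\ref{remark}: there, setting $f'\equiv 1$ in formula~\eqref{ricci} forces the $dr\otimes dr$ component of $\Ric_g$ to vanish, so the Ricci tensor admits a zero eigenvalue in the direction of the $\RR$ factor \emph{at every point} of $M^n$. This directly contradicts the standing assumption that $\Ric$ is positive definite at some point, and therefore the splitting case cannot occur.

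Consequently we must land in the second alternative of Theorem~\ref{teoY}, which yields both conclusions we seek: $(M^n,g)$ is rotationally symmetric, and there is a point $O\in M^n$ such that on $M^n\setminus\{O\}$ the metric takes the form $g = v^2(t)\,(dt^2+t^2 g^{\SS^{n-1}})$, exhibiting $(M^n,g)$ as globally conformally equivalent to $\RR^n$. The local conformal flatness is then automatic, since being locally conformally flat is a conformally invariant property and Euclidean space is flat, hence trivially locally conformally flat. I do not expect any serious obstacle here: the entire analytic and geometric content has already been carried by Theorem~\ref{teoY} and by the pointwise eigenvalue computation of Remark~\ref{remark}, and the only point deserving a moment's attention is that the zero eigenvalue in the product case holds at every point, which is precisely what makes even a single point of positive definiteness enough to rule that case out.
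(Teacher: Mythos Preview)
Your proposal is correct and follows exactly the approach indicated in the paper, which simply states that the corollary is easy to deduce from Remark~\ref{remark} applied to the dichotomy of Theorem~\ref{teoY}. Your write-up spells out precisely this reasoning.
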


It was proved by P. Daskalopoulos and N. Sesum in~\cite{dasksesum},
that any complete, noncompact, locally conformally flat, gradient
Yamabe soliton with positive sectional curvature 
has to be globally conformally equivalent to
$\RR^{n}$. Corollary~\ref{corY} shows that one can remove the assumption
of local conformal flatness and relax the hypothesis on the
sectional curvature. In~\cite{dasksesum} the authors also provide a 
complete classification of all rotationally symmetric gradient Yamabe
solitons in the steady, shrinking and expanding cases.

\medskip

\subsection{Gradient $k$--Yamabe Solitons}

A Riemannian manifold $(M^{n},g)$ is called a {\em gradient
  $k$--Yamabe soliton} if satisfies equation~\eqref{sol} with
$\varphi \, = \, 2(n-1)(\sigma_{k}-\lambda)$ for some constant
$\lambda\in\RR$, where $\sigma_{k}$ denotes the
$\sigma_{k}$--curvature of $g$. We recall that, if we denote by
$\mu_{1}, \ldots, \mu_{n}$ the eigenvalues of the symmetric
endomorphism $g^{-1}A$, where $A$ is the Schouten tensor defined by
$$
A \,= \, \tfrac{1}{ n-2} \,\big( \, \Ric -  \tfrac{1}{2(n-1)} \,R \,g \, \big) \, , 
$$
then the $\sigma_k$--curvature of $g$ is defined as the $k$--th
symmetric elementary function of $\mu_{1},\ldots,\mu_{n}$, namely
\begin{eqnarray*}
\sigma_{k}\,=\,\sigma_k(g^{-1} A) \, = \, \sum_{i_1\, <\,\ldots\,< \,
  i_k}\mu_{i_i}\cdot \, \ldots \, \cdot \mu_{i_k} \,\, \quad \hbox{for
  $1\leq k \leq n$}\,.
\end{eqnarray*}
Notice that $\sigma_{1}=\tfrac{1}{2(n-1)}R$, so gradient $1$--Yamabe solitons simply correspond to gradient Yamabe solitons. The structure equation takes the form
\begin{equation}\label{kYsol}
\nabla^{2}f \, = \, 2(n-1)(\sigma_{k}-\lambda)\,g\,,
\end{equation}
for some constant $\lambda\in\RR$. As usual, if $\lambda=0$,
$\lambda>0$ or $\lambda<0$, then $g$ is called {\em steady}, {\em shrinking} or {\em expanding}, respectively. 

Again, we observe that only gradient $k$--Yamabe solitons with nonconstant potential function $f$ can be viewed as conformal gradient solitons.

We have seen that, for $k=1$, compact, gradient
Yamabe solitons have constant scalar curvature. By means of a generalized
Kazdan--Warner identity, for any $k\geq 2$, we can prove the following analogue of Theorem~\ref{teoYcpt}.

\begin{teo} Any compact, gradient $k$--Yamabe soliton with nonnegative Ricci tensor has constant $\sigma_{k}$--curvature $\sigma_{k}=\lambda$. Moreover, the potential function $f$ is constant.
\end{teo}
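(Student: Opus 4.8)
The plan is to run the same scheme as in Theorem~\ref{teoYcpt}, where the case $k=1$ was settled by the contracted second Bianchi identity (Schur's lemma); for $k\geq 2$ its role is taken by a \emph{generalized Kazdan--Warner identity} for the $\sigma_{k}$--curvature, with the nonnegativity of the Ricci tensor being what lets one reach the conclusion for the fully nonlinear quantity $\sigma_{k}$.

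First I would specialize the general soliton relation~\eqref{eq2} to the present case. Since equation~\eqref{kYsol} gives $\varphi=2(n-1)(\sigma_{k}-\lambda)$, hence $\nabla_{i}\varphi=2(n-1)\nabla_{i}\sigma_{k}$, identity~\eqref{eq2} becomes
$$
2(n-1)^{2}\,\nabla_{i}\sigma_{k}\,=\,-\,R_{ij}\nabla^{j}f\,,\qquad(\star)
$$
which is the exact $\sigma_{k}$--analogue of the relation $\tfrac12\nabla R=\div(\Ric)$ driving the computation for $k=1$. Contracting $(\star)$ with $\nabla^{i}f$ yields the pointwise identity $2(n-1)^{2}\,\langle\nabla\sigma_{k},\nabla f\rangle=-\Ric(\nabla f,\nabla f)$, so that the hypothesis $\Ric\geq 0$ forces $\langle\nabla\sigma_{k},\nabla f\rangle\leq 0$ at every point of $M^{n}$.

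The decisive input is then the generalized Kazdan--Warner identity, which for the gradient conformal field $\nabla f$ on the closed manifold $M^{n}$ asserts that $\int_{M^{n}}\langle\nabla\sigma_{k},\nabla f\rangle\,dV_{g}=0$; this is the higher--order replacement of the integrated Schur identity used in Theorem~\ref{teoYcpt}. Combining it with the pointwise sign just obtained, a nonpositive integrand with vanishing integral must vanish identically, whence $\Ric(\nabla f,\nabla f)\equiv 0$. Since $\Ric\geq 0$, this means $\nabla f$ lies in the kernel of $\Ric$, i.e. $R_{ij}\nabla^{j}f\equiv 0$, and feeding this back into $(\star)$ gives $\nabla\sigma_{k}\equiv 0$: the $\sigma_{k}$--curvature is constant. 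Finally $\Delta f=n\varphi=2n(n-1)(\sigma_{k}-\lambda)$ is then constant, and integrating over the closed $M^{n}$ forces this constant to be zero, so $\sigma_{k}=\lambda$; consequently $\Delta f=0$ and, $M^{n}$ being compact, $f$ is constant.

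The main obstacle is the generalized Kazdan--Warner identity for $k\geq 2$. For $k=1$ it is merely the divergence of $\Ric$, but for higher $k$ the natural object is the Newton transformation $T_{k-1}$ of the Schouten tensor $A$, whose divergence is not zero in general: it produces a Cotton--type curvature contraction that vanishes only under additional assumptions (such as local conformal flatness). Establishing the identity in our setting---controlling or absorbing these terms so that the argument runs in the presence of the curvature sign, rather than under a flatness hypothesis---is the delicate part; once it is in hand, the sign-and-integration routine above closes the proof at once.
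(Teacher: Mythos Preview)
Your scheme is exactly the one the paper uses, and your endgame is even a touch cleaner: from $\Ric\geq 0$ and $\Ric(\nabla f,\nabla f)=0$ you pass directly to $R_{ij}\nabla^{j}f=0$ (positive semidefiniteness), hence $\nabla\sigma_{k}=0$ by $(\star)$. The paper instead argues that $\langle\nabla f,\nabla\sigma_{k}\rangle=0$ and then uses rotational symmetry to conclude $\sigma_{k}$ is constant; your route avoids that detour.

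The genuine gap is the one you yourself flag: you do not justify the generalized Kazdan--Warner identity $\int_{M^{n}}\langle\nabla\sigma_{k},\nabla f\rangle\,dV_{g}=0$, and you correctly note that for $k\geq 2$ this is only known under local conformal flatness (the divergence of the Newton tensor produces Cotton terms). The paper does not try to ``absorb'' these terms using the curvature sign; instead it uses Theorem~\ref{teo1} to obtain local conformal flatness for free. Argue by contradiction: if $\sigma_{k}$ is nonconstant then $f$ is nonconstant (since $\Delta f=2n(n-1)(\sigma_{k}-\lambda)$), so $(M^{n},g)$ is a compact conformal gradient soliton and Case~(3) of Theorem~\ref{teo1} applies, giving that $(M^{n},g)$ is globally conformally equivalent to $\SS^{n}$ and in particular locally conformally flat. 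Han's identity then holds as stated, and your argument runs unchanged. In short, the missing idea is not an analytic estimate on the Newton tensor but simply to feed the compact case of the paper's structure theorem back into the problem.
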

\begin{proof} Let us suppose, by contradiction, that $\sigma_{k}$ is
  nonconstant. Then $f$ cannot be constant, since $\Delta f =2n(n-1)\,(\sigma_{k}-\lambda)$. Hence,
  we can apply Theorem~\ref{teo1}, obtaining that $(M^{n},g)$ is
  globally conformally equivalent to $\SS^{n}$, in particular, $g$ is
  locally conformally flat. It was proved in the proof in~\cite{han1} that, on a
  compact, locally conformally flat, Riemannian manifold, one has 
$$
\int_{M^n}\langle X,\nabla\sigma_{k}\rangle\,dV_{g} \,=\,0 \,,
$$
for every conformal Killing vector field $X$ on $(M^{n},g)$. For
$k=1$, this obstruction corresponds to the well known Kazdan--Warner
identity, which holds on any compact Riemannian manifold (i.e.,
without assuming the locally conformally flatness,
see~\cite{bourgezin}). From the structure equation~\eqref{kYsol}, we
know that $\nabla f$ is a conformal Killing vector field, hence, it
follows that
$$
\int_{M^n}\langle \nabla f,\nabla\sigma_{k}\rangle\,dV_{g} \,=\,0 \,.
$$
Now, contracting the identity~\eqref{eq2} with $\nabla f$, and integrating over $M^n$, we obtain
$$
0\,=\, \int_{M^n}\langle \nabla f,\nabla\sigma_{k}\rangle\,dV_{g} \,=\,
-\frac{1}{2(n-1)^{2}} \int_{M^n} \Ric(\nabla f,\nabla f)\,dV_{g} \,.
$$  
From the fact that $g$ has nonnegative Ricci tensor, we obtain that
$\Ric(\nabla f,\nabla f)=0$ everywhere. Then, by equation~\eqref{eq2}, we get $\langle \nabla f,\nabla
\sigma_{k}\rangle =0$. Since $g$ is rotationally symmetric, we have
that $\sigma_{k}$ is constant on the regular level sets of $f$. Hence,
the condition $\langle \nabla f,\nabla \sigma_{k}\rangle =0$ is sufficient
to conclude that $\sigma_{k}$ is constant. This implies that $\Delta f$ is constant. Since $M^n$ is compact, the only possibility is that $f$ is constant and $\sigma_{k}=\lambda$.
\end{proof}

\begin{rem} The same result holds if one considers a generalized $k$--Yamabe soliton structure~\eqref{sol} with $\varphi = \psi (\sigma_{k})$,
for every $k\geq 1$ and every strictly monotone function
$\psi:\RR\to\RR$. For instance, in~\cite{guanwang} the authors consider a
fully nonlinear conformal flow with velocity
$\varphi=\log(\sigma_{k})-\lambda$, with $\sigma_{k}>0$.
\end{rem}

Again, when the function $f$ is constant, obviously the $\sigma_{k}$--curvature of the gradient $k$--Yamabe soliton is also constant. Hence, in such a case Theorem~\ref{teo1} does not apply. In the complete, noncompact case, as an immediate application of
Theorem~\ref{teo1}, we can prove the following global result. 

\begin{teo}\label{teokY} Let $(M^{n},g)$ be a complete, noncompact,
  gradient $k$--Yamabe soliton with nonnegative Ricci tensor and
  nonconstant potential function $f$. Then, we have the following two cases:
\begin{itemize}  
 \item[(1)] either $(M^{n},g)$ is a direct product
  $\RR\times N^{n-1}$ where $(N^{n-1},g^N)$ is 
an $(n-1)$--dimensional complete Riemannian manifold with nonnegative Ricci tensor. If in addition $(M^{n},g)$ is locally conformally flat, then either it is flat or 
the manifold $(N^{n-1},g^N)$ is a quotient of the round sphere $\SS^{n-1}$;
\smallskip
\item[(2)] or $(M^{n},g)$ is rotationally symmetric and globally
  conformally equivalent to $\RR^n$. More precisely, there exists a
  point $O\in M^n$ such that on $M^{n}\setminus \{O\}$ 
  the metric has the form
$$
g \, = \, v^{2}(t)\, \big(dt^{2}+t^{2}g^{\SS^{n-1}}\big) \,,
$$
where $v:\RR^+\to \RR$ is some positive smooth function.
\end{itemize}
\end{teo}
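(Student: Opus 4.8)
The plan is to obtain this result as a direct specialization of Theorem~\ref{teo1}, with no genuinely new ingredient. The starting observation is that, since the potential $f$ is assumed nonconstant, the structure equation~\eqref{kYsol} is precisely of the form~\eqref{sol} with $\varphi = 2(n-1)(\sigma_k - \lambda)$; hence $(M^n,g)$ is a conformal gradient soliton in the sense of the introduction, and Theorem~\ref{teo1} applies verbatim. I would stress at the outset that no property of $\sigma_k$ beyond its producing a conformal factor $\varphi$ is needed for the structural classification: the $k$--Yamabe nature of the soliton plays no role here, so the argument is word for word the one used for Theorem~\ref{teoY}.

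First I would invoke Theorem~\ref{teo1} to place $(M^n,g)$ into one of its cases according to the number of critical points of $f$. Since $(M^n,g)$ is noncompact, Case~(3) of Theorem~\ref{teo1}---which produces a \emph{compact} manifold globally conformally equivalent to $\SS^n$---is immediately excluded. Therefore $f$ has either no critical point or exactly one critical point on $M^n$, and these are the only two possibilities to analyze.

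Next I would bring in the nonnegativity of the Ricci tensor to select the refined alternatives. If $f$ has no critical points, Case~(1') of Theorem~\ref{teo1} gives at once that $(M^n,g)$ is \emph{isometric} to a product $\RR \times N^{n-1}$, where $(N^{n-1},g^N)$ is complete with nonnegative Ricci tensor; the locally conformally flat refinement (either flat, or a product of $\RR$ with a quotient of $\SS^{n-1}$) is exactly the conclusion recorded there. This yields alternative~(1). If instead $f$ has a single critical point $O\in M^n$, Case~(2') of Theorem~\ref{teo1} asserts that $(M^n,g)$ is globally conformally equivalent to $\RR^n$ and rotationally symmetric, with the metric taking the stated form $g = v^2(t)\,(dt^2 + t^2\, g^{\SS^{n-1}})$ on $M^n \setminus \{O\}$; this is alternative~(2).

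The main point is thus not a real obstacle but merely the verification that the hypotheses of Theorem~\ref{teo1} are satisfied and that noncompactness rules out the spherical Case~(3); once these are in place, the two alternatives drop out of Cases~(1') and~(2') with nothing further to prove. I would close by noting that the statement is identical to that of Theorem~\ref{teoY}, the two differing only in the defining expression for $\varphi$, which is irrelevant to the classification.
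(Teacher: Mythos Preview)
Your proposal is correct and matches the paper's approach exactly: the paper itself gives no explicit proof of this theorem, stating only that it is an immediate application of Theorem~\ref{teo1} (with noncompactness ruling out Case~(3) and nonnegative Ricci selecting Cases~(1') and~(2')), which is precisely what you have spelled out. Your observation that the $k$--Yamabe structure plays no role beyond providing a $\varphi$ in~\eqref{sol} is also on point, and explains why the statement and argument are identical to those of Theorem~\ref{teoY}.
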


From Remark~\ref{remark}, it is now easy to deduce the following corollary.
\begin{cor} Let $(M^{n},g)$ be a complete, noncompact, gradient
  $k$--Yamabe soliton with nonnegative Ricci tensor and nonconstant
  potential function $f$. If the Ricci tensor is positive at some
  point, then $(M^{n},g)$ is rotationally symmetric and globally
  conformally equivalent to $\RR^n$.
\end{cor}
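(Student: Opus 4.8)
The plan is to apply Theorem~\ref{teokY}, which already provides a sharp dichotomy under exactly the hypotheses at hand (complete, noncompact, gradient $k$--Yamabe soliton with $\Ric\geq 0$ and nonconstant $f$): the manifold is either (1) a direct product $\RR\times N^{n-1}$, or (2) rotationally symmetric and globally conformally equivalent to $\RR^n$. Since these two alternatives are exhaustive, it suffices to exclude case (1) under the additional assumption that the Ricci tensor is positive definite at some point of $M^n$. In other words, the entire content of the corollary is the observation that the extra curvature hypothesis is incompatible with splitting.

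To rule out case (1), I would invoke Remark~\ref{remark}. The splitting case (1) of Theorem~\ref{teokY} is precisely the conclusion of Case~1' of Theorem~\ref{teo1}, where one shows that the potential satisfies $f'\equiv 1$, hence $f'''\equiv 0$. Substituting into formula~\eqref{ricci} gives $\Ric_g(\partial_r,\partial_r)=-(n-1)f'''/f'=0$ at every point; equivalently, the flat $\RR$ factor forces the unit field $\partial_r$ tangent to it to be a zero eigenvector of the Ricci tensor everywhere on $M^n$. Thus in case (1) the Ricci tensor has a vanishing eigenvalue at each point and can never be positive definite. This directly contradicts the hypothesis that $\Ric$ is positive definite at some point, so case (1) cannot occur. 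The only surviving alternative is case (2), which is exactly the asserted conclusion: $(M^n,g)$ is rotationally symmetric and globally conformally equivalent to $\RR^n$.

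I do not expect any genuine obstacle here, as the statement is an immediate consequence of the classification in Theorem~\ref{teokY} combined with the eigenvalue computation of Remark~\ref{remark}. The one point I would check carefully is that Remark~\ref{remark} was originally recorded in the context of Case~1' of Theorem~\ref{teo1}, so I would make explicit that its conclusion transfers verbatim to the splitting alternative (1) of Theorem~\ref{teokY}. This is automatic, since that alternative \emph{is} the output of Case~1' of Theorem~\ref{teo1}, and the vanishing of $\Ric_g(\partial_r,\partial_r)$ depends only on the product structure, not on the particular soliton equation ($k$--Yamabe rather than Yamabe) that produced it.
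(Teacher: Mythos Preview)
Your proposal is correct and follows exactly the approach indicated in the paper: apply the dichotomy of Theorem~\ref{teokY} and use Remark~\ref{remark} to exclude the product case~(1), since in that case $\Ric_g(\partial_r,\partial_r)\equiv 0$. Your extra care in verifying that Remark~\ref{remark} transfers from Case~1' of Theorem~\ref{teo1} to alternative~(1) of Theorem~\ref{teokY} is appropriate and correctly justified.
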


\

\begin{ackn} 
The authors are partially supported by the Italian project FIRB--IDEAS ``Analysis and Beyond''.
\end{ackn}

\medskip

\noindent {\bf Note.} {\em During the editing of this work, H.-D.~Cao,
  X.~Sun and Y.~Zhang posted on the {\em ArXiv Preprint Server} the
  manuscript~\cite{caosunzhang}, where a classification result for
  gradient Yamabe solitons similar to the one discussed in
  Section~\ref{s_yamabe} (in particular, Theorem~\ref{teoY}) is
  obtained.}

\

\

\bibliographystyle{amsplain}
\bibliography{confsol}

\providecommand{\bysame}{\leavevmode\hbox to3em{\hrulefill}\thinspace}
\providecommand{\MR}{\relax\ifhmode\unskip\space\fi MR }
\providecommand{\MRhref}[2]{%
  \href{http://www.ams.org/mathscinet-getitem?mr=#1}{#2}
}
\providecommand{\href}[2]{#2}
\begin{thebibliography}{10}

\bibitem{besse}
A.~L. Besse, \emph{Einstein manifolds}, Springer--Verlag, Berlin, 2008.

\bibitem{bourgezin}
J.-P. Bourguignon and J.-P. Ezin, \emph{Scalar curvature functions in a
  conformal class of metrics and conformal transformations}, Trans. Amer. Math.
  Soc. \textbf{301} (1987), no.~2, 723--736.

\bibitem{caosunzhang}
H.-D. Cao, X.~Sun, and Y.~Zhang, \emph{On the structure of gradient {Y}amabe
  solitons}, ArXiv Preprint Server -- http://arxiv.org, 2011.

\bibitem{cheegcold}
J.~Cheeger and T.~H. Colding, \emph{Lower bounds on {R}icci curvature and the
  almost rigidity of warped products}, Ann. of Math. (2) \textbf{144} (1996),
  no.~1, 189--237.

\bibitem{chowluni}
B.~Chow, P.~Lu, and L.~Ni, \emph{Hamilton's {R}icci flow}, Graduate Studies in
  Mathematics, vol.~77, American Mathematical Society, Providence, RI, 2006.

\bibitem{dasksesum}
P.~Daskalopoulos and N.~Sesum, \emph{The classification of locally conformally
  flat {Y}amabe solitons}, ArXiv Preprint Server -- http://arxiv.or, 2011.

\bibitem{guanwang}
P.~Guan and G.~Wang, \emph{A fully nonlinear conformal flow on locally
  conformally flat manifolds}, 2001.

\bibitem{han1}
Z.-C. Han, \emph{A {K}azdan--{W}arner type identity for the $\sigma_{k}$
  curvature}, C. R. Math. Acad. Sci. Paris \textbf{342} (2006), no.~7,
  475--478.

\bibitem{pw}
P.~Petersen and W.~Wylie, \emph{On the classification of gradient {R}icci
  solitons}, Geom. Topol. \textbf{14} (2010), no.~4, 2277--2300.

\bibitem{tashiro}
Y.~Tashiro, \emph{Complete {R}iemannian manifolds and some vector fields},
  Trans. Amer. Math. Soc \textbf{117} (1965), 251--275.

\end{thebibliography}

\

\end{document}